\DeclareMathAlphabet{\mathbbold}{U}{bbold}{m}{n}	
\theoremstyle{plain}
\newtheorem{theorem}{Theorem}
\newtheorem*{theorem*}{Theorem}
\newtheorem{prop}[theorem]{Proposition}
\newtheorem{lemma}[theorem]{Lemma}
\newtheorem*{lemma*}{Lemma}
\theoremstyle{definition}
\theoremstyle{remark}
\newtheorem{rmk}[theorem]{Remark}
\newcommand{\mis}{\mathsf{m}}	
\newcommand{\G}{\mathbb{G}}
\newcommand{\R}{\mathbb{R}}
\author[Luca Rizzi]{Luca Rizzi}
\address{Univ. Grenoble Alpes, CNRS, Institut Fourier, F-38000 Grenoble, France}
\email{\href{mailto:luca.rizzi@univ-grenoble-alpes.fr}{luca.rizzi@univ-grenoble-alpes.fr}}
\title[A counterexample to gluing theorems for MCP spaces]{A counterexample to gluing theorems for MCP metric measure spaces}
\subjclass[2010]{53C17, 54E45, 54E50}
\begin{document}

\begin{abstract}
Perelman's doubling theorem asserts that the metric space obtained by gluing along their boundaries two copies of an Alexandrov space with curvature $\geq \kappa$ is an Alexandrov space with the same dimension and satisfying the same curvature lower bound. We show that this result cannot be extended to metric measure spaces satisfying synthetic Ricci curvature bounds in the $\mathrm{MCP}$ sense. The counterexample is given by the Grushin half-plane, which satisfies the $\mathrm{MCP}(0,N)$ if and only if $N\geq 4$, while its double satisfies the $\mathrm{MCP}(0,N)$ if and only if $N\geq 5$.
\end{abstract}

\maketitle


\section{Statement of the results}

Perelman's doubling theorem \cite[5.2]{Perelman} states that an $n$-dimensional Alexandrov space with curvature $\geq \kappa$ can be doubled along its boundary yielding an Alexandrov space with the same curvature lower bound and dimension. This result has been extended by Petrunin \cite[Theorem 2.1]{Petrunin} to the gluing of Alexandrov spaces with isometric boundaries. More precisely, we have the following result, of which Perelman's doubling theorem constitutes a particular case.

\begin{theorem}[Petrunin's gluing theorem]\label{t:gluing}
Let $X$ and $Y$ be Alexandrov spaces of the same dimension and having curvature $\geq \kappa$. Suppose that their boundaries are isometric. Then the space $Z$ obtained by gluing $X$ and $Y$ along an isometry of their boundaries is an Alexandrov space of curvature $\geq \kappa$.
\end{theorem}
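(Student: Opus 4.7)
The plan is to verify the Alexandrov curvature bound $\geq \kappa$ on $Z$ by checking the (local) quadruple comparison condition (equivalently, Toponogov's triangle comparison). Since $X$ and $Y$ each satisfy the bound and are locally isometric to an open subset of $Z$ away from the interface $\Sigma := \partial X \cong \partial Y$, the whole problem is concentrated in a neighborhood of $\Sigma$. The special case $X=Y$ is Perelman's doubling theorem, which I would invoke as a known building block, thereby also obtaining Alexandrov comparison on a tubular neighborhood of $\Sigma$ inside each of the doubles of $X$ and of $Y$.

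The key structural input is that the boundary of an Alexandrov space of curvature $\geq \kappa$ is an extremal, and hence locally convex, subset. I would use this in two places. First, a minimizer in $Z$ between $p\in X$ and $q\in Y$ can be forced, by a first variation argument against the convex set $\Sigma$, to cross $\Sigma$ in a single point $x_0$. Second, applying the first variation formula at $x_0$ separately from the $X$-side and from the $Y$-side shows that the incoming and outgoing directions of the minimizer satisfy a reflection law: their tangential components to $\Sigma$ coincide under the boundary isometry $\phi\colon\partial X\to\partial Y$. This allows any minimizer of $Z$ to be ``unfolded'' into a concatenation of genuine minimizers in $X$ and in $Y$ with matching tangential velocities at $\Sigma$.

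With this unfolding description, the quadruple condition for $Z$ is reduced to a case analysis on the distribution of the four points among $X$ and $Y$. The pure cases (all four in $X$, or all four in $Y$) are immediate from the hypothesis. In each mixed case, distances in $Z$ are rewritten through the unfolding as distances in $X$ or $Y$ plus interface contributions controlled by $\phi$, and the comparison inequality in $Z$ is assembled from the corresponding inequalities inside $X$ and $Y$, together with Perelman's doubling theorem applied to a thin neighborhood of $\Sigma$.

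The main obstacle I expect is precisely this reflection/unfolding step. At a boundary point of an Alexandrov space the tangent cone need not be Euclidean, minimizers meeting $\Sigma$ need not be smooth across it, and one must also rule out (or treat separately) minimizers running along $\Sigma$ itself. To handle these issues I would rely on the standard machinery of quasigeodesics and of gradient flows of semiconcave functions on Alexandrov spaces, which provides both a robust notion of ``geodesic reflection'' at extremal subsets and the distance-contraction estimates needed to pass from the $X$- and $Y$-comparisons to the global comparison on $Z$.
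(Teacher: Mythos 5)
This statement is not proved in the paper at all: it is quoted as known, with the doubling case attributed to Perelman and the general gluing case to Petrunin, so your attempt can only be measured against the cited literature. Your outline does circle around the same ingredients as Petrunin's actual argument (Perelman's doubling theorem as a black box, extremality of the boundary, quasigeodesics and gradient flows), but as written the central steps are missing or incorrect, precisely at the point you yourself flag as the obstacle.

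Concretely: (i) the ``reflection law'' asserting that the incoming and outgoing directions of a minimizer crossing $\Sigma$ have matching tangential components under $\phi$ is a smooth, Snell-type statement; at a boundary point of an Alexandrov space the tangent cone is only a metric cone, there is no linear splitting into tangential and normal parts, and the first variation formula yields only one-sided angle inequalities, not the identity your unfolding requires. Even granting a correct angle condition, the substance of Petrunin's proof is the lemma that the unfolded concatenation behaves as a quasigeodesic in the doubled space, and this is a genuine theorem, not something that follows by merely invoking ``standard machinery.'' (ii) Applying Perelman's doubling theorem ``to a thin neighborhood of $\Sigma$'' is not legitimate: a tubular neighborhood of the boundary, with its induced length metric, is in general neither complete nor an Alexandrov space of curvature $\geq \kappa$ (it need not even be convex in $X$), so the doubling theorem does not apply to it; moreover the doubles of $X$ and of $Y$ are different spaces from $Z$, and comparison statements inside them do not transfer to $Z$ without essentially the gluing theorem itself. (iii) The assembly of the quadruple comparison in the mixed cases is asserted rather than carried out — comparison angles do not simply add across the interface — and the degenerate configurations you mention (minimizers meeting $\Sigma$ in several points or running along it) are acknowledged but not treated. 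So what you have is a plausible plan whose key lemma (the unfolding/quasigeodesic step) and whose final comparison argument are both absent; it does not yet constitute a proof.
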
 

Theorem~\ref{t:gluing} is false when gluing along general codimension $1$ subsets, e.g.\ parts of the boundary. Indeed, take two flat triangles, both with a side $L$ of given length. The glued space along $L$ is in general a quadrilateral, which is an Alexandrov space if and only if it is convex.

\medskip

It is interesting to understand whether these classical results in Alexandrov geometry hold true for more general metric measure spaces $(X,d,\mis)$ satisfying synthetic curvature bounds in the sense of Lott-Sturm-Villani. It is necessary to mention that a notion of boundary for m.m.s.\ is not yet available in full generality. Nevertheless, any meaningful notion of boundary should agree with the classical one when $X$ is a smooth manifold with topological boundary $\partial X$, $d$ is a complete metric that is continuous with respect to the topology of $X$, and $\mis$ is a smooth measure (i.e., defined by a smooth, positive tensor density). We call spaces $(X,d,\mis)$ satisfying these conditions \emph{smooth m.m.s.\ with boundary}.

The \emph{measure contraction property} is one of the weakest synthetic curvature bounds, and it was introduced independently in \cite{Ohta1} and \cite{Sturm2}. For proper, complete, essentially non-branching m.m.s.\ equipped with a locally finite non-negative Borel measure, the two definitions are equivalent, see \cite[Appendix A]{CM-optimal}. This condition, briefly $\mathrm{MCP}(K,N)$, depends on two parameters $K$ and $N$, playing the role of a Ricci curvature lower bound and a dimensional upper bound, respectively. We recall that any $n$-dimensional complete Alexandrov space with curvature $\geq \kappa$ satisfies the $\mathrm{MCP}((n-1)\kappa,n)$, and that for an $n$-dimensional complete Riemannian manifold the condition $\mathrm{Ric}_g \geq \kappa$ is equivalent to the $\mathrm{MCP}(\kappa,n)$.

If $K =0$, and $N \geq 1$, a m.m.s.\ $(X,d,\mis)$ satisfies the $\mathrm{MCP}(0,N)$ if for all $x \in X$ there exists a measurable map $\phi : X \to \mathrm{Geo}(X)$ such that, letting $\phi_t = e_t \circ \phi$, one has $\phi_0 = x$, $\phi_1 = \mathrm{id}_X$, and the following inequality holds true for every measurable set $A \subseteq X$ with $0<\mis(A) < +\infty$:
\begin{equation}
\mis(\phi_t(A)) \geq t^N \mis(A), \qquad \forall t \in [0,1].
\end{equation}
Here, $\mathrm{Geo}(X)$ is the space of geodesics of $(X,d)$, that is
\begin{equation}
\mathrm{Geo}(X) = \{\gamma \in C([0,1],X) \mid d(\gamma_t,\gamma_s) = |t-s|d(\gamma_0,\gamma_1)\},
\end{equation}
and for all $t \in [0,1]$ we denoted by $e_t : \mathrm{Geo}(X) \to X$ the evaluation map.

The purpose of this note is to prove that Perelman's doubling theorem and its generalizations cannot be extended to m.m.s.\ satisfying the $\mathrm{MCP}$.

\begin{theorem}\label{t:nopet}
For any $N \geq 5$ there exists a smooth m.m.s.\ with boundary $(X,d,\mis)$ that satisfies the $\mathrm{MCP}(0,N-1)$ and such that its double satisfies the $\mathrm{MCP}(0,N)$ but not the $\mathrm{MCP}(0,N-\varepsilon)$, for all $\varepsilon>0$.
\end{theorem}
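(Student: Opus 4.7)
The idea is to build the counterexample from a Grushin-type structure, as foreshadowed by the abstract. Let $G^+ := \{(x,y)\in\R^2 : x\geq 0\}$ be equipped with the sub-Riemannian distance $d$ generated by the vector fields $X_1=\partial_x$ and $X_2=x\partial_y$ and with the Lebesgue measure. This is a smooth m.m.s.\ with boundary $\partial G^+ = \{x=0\}$, and its double along $\partial G^+$ is naturally identified with the full Grushin plane $G=\R^2$ via the reflection $(x,y)\mapsto(-x,y)$. To reach an arbitrary $N\geq 5$, I would take $X := G^+ \times \R^{N-5}$ with the product sub-Riemannian metric and the product Lebesgue measure. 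Then $\partial X = \partial G^+\times\R^{N-5}$ and the double is $G\times\R^{N-5}$. Using the stability of the $\mathrm{MCP}$ under products with Euclidean factors (the dimensional parameter being shifted by the dimension of the factor), the theorem reduces to two sharp statements: (a) $G^+$ satisfies $\mathrm{MCP}(0,N)$ if and only if $N\geq 4$; (b) $G$ satisfies $\mathrm{MCP}(0,N)$ if and only if $N\geq 5$. Statement (b) is already established, so the bulk of the proof is dedicated to (a).

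For (a), the positive direction requires, for every interior base point $p\in G^+$, a measurable selection $\phi:G^+\to\mathrm{Geo}(G^+)$ with $\phi_0\equiv p$ and $\phi_1\equiv\mathrm{id}$ whose evaluation Jacobian $J_t$ satisfies $J_t\geq t^4$ almost everywhere. The first task is to describe the geodesics of $G^+$ emanating from $p$: besides the usual Grushin normal geodesics contained in $\{x>0\}$, one must handle the cut locus and the geodesics that touch or run along $\partial G^+$. An explicit computation of the differential of the sub-Riemannian exponential map based at $p$ then produces a closed form for $J_t$ that one verifies is pointwise bounded below by $t^4$. Sharpness follows by exhibiting a sequence of small sets $A$ accumulating near $\partial G^+$ whose images $\phi_t(A)$ have measure asymptotically equivalent to $t^4 \mis(A)$ as $t\to 0^+$.

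The main obstacle is the Jacobian analysis near geodesics that touch or run along $\partial G^+$. In the full plane $G$ the singular line $\{x=0\}$ carries a two-parameter family of normal geodesics degenerating to it, which is ultimately responsible for the sharp exponent $N=5$. In the half-plane only one side of this family survives, so heuristically the effective dimensional parameter decreases by one unit. Making this rigorous requires an explicit disintegration of Lebesgue measure along the geodesic rays from $p$, keeping careful track of boundary-touching rays and of the missing transverse direction; only then will the sharp improved exponent $N=4$ emerge from the Jacobian computation and pass through the product reduction to yield the statement of Theorem~\ref{t:nopet}.
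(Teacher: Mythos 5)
Your skeleton is the same as the paper's: take the Grushin half-plane $\G_+=(\R^2_+,d,\mathcal{L})$ as the smooth m.m.s.\ with boundary, identify its double along the $y$-axis with the full Grushin plane, multiply by $\R^{N-5}$ and invoke additivity of the dimensional parameter under products, and reduce everything to the two sharp thresholds: $\G_\pm$ satisfies $\mathrm{MCP}(0,N)$ iff $N\geq 4$, while $\G$ satisfies it iff $N\geq 5$ (the latter quoted from the literature, as you do). One step you treat as automatic does need an argument: the identification of the double with $\G$ (with matching measures) rests on the convexity of the closed half-planes in $\G$ --- any two points of $\R^2_\pm$ are joined by a unique Grushin geodesic contained in $\R^2_\pm$ --- so that the restricted distance is intrinsic and the glued distance coincides with the Grushin distance; this is where the explicit description of Grushin geodesics and of the cut locus first enters. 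The same uniqueness statement is also what lets you \emph{negate} the $\mathrm{MCP}$: since for a.e.\ endpoint the geodesic from the base point is unique, any admissible selection $\phi$ agrees a.e.\ with the exponential one, so a Jacobian computation suffices both for the positive and for the negative direction.

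The genuine gap is your item (a), which is the entire new mathematical content of the theorem and which you leave at the level of ``one verifies'' that the Jacobian is bounded below by $t^4$. Closing it requires: (i) the explicit Jacobian of the Grushin exponential map, $J(x,y,u,v)=\bigl((u^2+uv^2x+v^2x^2)\sin v-u^2v\cos v\bigr)/v^3$; (ii) the characterization of the covectors whose geodesic stays in the half-plane, namely $|v|<\pi$ and $x\cos(tv)+u\sin(tv)/v\geq 0$ for all $t\in[0,1]$; and (iii) the inequality $J(x,y,tu,tv)\geq t^{2}J(x,y,u,v)$ on that set. In the paper, (iii) is proved by logarithmic differentiation along the ray, the substitution $a=vx/u$, and reduction to a quadratic inequality in $a$ with trigonometric coefficients, in which the half-plane constraint, rewritten as $a^2\cos v+a\sin v\geq 0$, is used crucially when $a<0$; that constraint is exactly what is missing in the full plane and forces $N\geq 5$ there, so your heuristic ``only one side of the degenerating family survives, hence the exponent drops by one'' does not by itself produce the exponent $4$. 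For sharpness, the paper uses the horizontal geodesics $v=0$, where the ratio equals $(t^2u^2+3tux+3x^2)/(u^2+3ux+3x^2)$ and the constraint $u\geq -x$ makes $N=4$ admissible and every $N<4$ impossible; your proposed sets with base point on (or near) the boundary, where this ratio is exactly $t^2$, would also do, but again this must be checked against the explicit formula. As written, your text is a correct plan that follows the paper's route, but its decisive lemma is asserted rather than proved.
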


To prove Theorem~\ref{t:nopet}, we exhibit a smooth m.m.s.\ with boundary, the Grushin half-plane, satisfying the required properties for $N=5$. The general case follows by taking the product with Euclidean spaces of the appropriate dimension, and using the additivity of the dimensional parameter of the $\mathrm{MCP}$ under metric products \cite[Proposition 3.3]{Ohta2}. We do not know whether the bound $N \geq 5$ is optimal.

\subsection*{Open problem}
It is interesting to understand whether gluing-type theorems hold in the more restrictive setting of essentially non-branching m.m.s.\ with boundary satisfying the $\mathrm{CD}(K,N)$ condition. The latter, similarly to the Alexandrov condition, is known to be a local property \cite{CM-CD}.

\subsection*{Acknowledgements}
The problem of extending gluing theorems to $\mathrm{MCP}$ spaces, in connection with the Grushin structure, was brought to my attention by N. Gigli. I wish to thank him and K.-T. Sturm for their comments.

\section{The Grushin plane and half-plane}

The Grushin plane is the basic example of almost-Riemannian structure, introduced in \cite{Gauss-Bonnet}. We give here a self-contained presentation, and we refer to the monograph \cite{nostrolibro} for a systematic presentation and proofs. Consider on $\R^2$ the smooth vector fields
\begin{equation}\label{eq:frame}
X_1 = \partial_x, \qquad X_2 = x \partial_y,
\end{equation}
which are orthonormal with respect to the singular Riemannian metric
\begin{equation}\label{eq:metric}
g= dx \otimes dx + \frac{1}{x^2} dy \otimes dy.
\end{equation}
We say that a Lipschitz curve $\gamma:[0,1] \to \R^2$ is \emph{admissible} if there exist $u \in L^\infty([0,1],\R^2)$ such that
\begin{equation}
\dot\gamma(t) = \sum_{i=1}^2 u_i(t) X_i(\gamma(t)), \qquad a.e.\ t \in [0,1].
\end{equation}
We define the length of an admissible curve as
\begin{equation}
\ell(\gamma) = \int_0^1 \sqrt{ u_1^2(t)+ u_2^2(t)}\, dt. 
\end{equation}
The minimization of the length functional yields a metric structure $d$ on $\R^2$, which is complete and continuous with respect to the Euclidean topology. An important fact, well known in optimal control theory, is that all length-minimizing curves on $\G = (\R^2,d)$ are described by a degenerate Hamiltonian flow on the cotangent bundle, with Hamiltonian $H : T^*\G \to \R$ given explicitly in coordinates $(x,y,u,v)$ on $T^*\G$ by\footnote{Hereafter we employ standard canonical coordinates on $T^*\R^2$, in such a way that a covector $\lambda = u dx + v dy \in T_{(x,y)}^* \R^2$ has coordinates $(x,y,u,v)$.}
\begin{equation}
H = \frac{1}{2}(u^2+x^2 v^2).
\end{equation}
For all $q \in \G$, the \emph{exponential map} $\exp_q : T_q^* \G \to \G$ is defined by projecting the Hamiltonian flow on $\G$.  A crucial fact is that the aforementioned system is integrable. The explicit expression of $\exp$ is not necessary at this point and it is postponed to Section~\ref{s:exponentialmap}. The essential properties concerning our analysis are resumed in Proposition~\ref{p:grushingeod}. In Appendix~\ref{s:appendix} we provide a more detailed overview of the geodesic structure of the Grushin plane.

\begin{prop}[Grushin cotangent injectivity domain]\label{p:grushingeod}
For all $q =(x,y)\in \G$ there exists a smooth map $\exp_q : T_q^* \G \to \G$ and an open set
\begin{equation}
D_q :=  \{(u,v) \in T_q^*\G \mid  H(x,y,u,v) \neq 0, \quad |v| < \pi\} \subset T_q^* \G,
\end{equation}
such that, for all $q \in \G$:
\begin{itemize}
\item the restriction $\exp_q : D_q \to \exp_q(D_q)$ is a smooth diffeomorphism;
\item the set $\exp_q(D_q)$ is dense in $\G$, and its complement has zero measure;
\item for all $p \in \exp_q(D_q)$ there exists a unique geodesic joining $q$ with $p$, given by $\gamma_t= \exp_q(t\lambda)$ for a unique $\lambda \in D_q$.
\end{itemize}
\end{prop}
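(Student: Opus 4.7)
My approach is to integrate the geodesic flow explicitly, exploiting the complete integrability of the underlying Hamiltonian system, and then read off the three properties by direct inspection of the resulting closed-form expression for $\exp_q$.

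Since $H = \tfrac12(u^2 + x^2 v^2)$ is independent of $y$, the Hamilton equations give $\dot v = 0$: the flow decouples into a one-parameter family of harmonic oscillators in $(x,u)$ of frequency $|v|$, plus a simple quadrature for $y$. Integration yields explicit formulas for $\exp_q(u,v)$ that are rational in $\sin v$, $\cos v$ and $1/v$; the apparent singularity at $v=0$ is removable, the limit matching the horizontal Riemannian geodesics $(x+u, y)$, so $\exp_q$ extends to a smooth map on all of $T_q^*\G$.

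To prove that $\exp_q|_{D_q}$ is a diffeomorphism, I would check injectivity and non-degeneracy of the differential separately. A direct computation shows that $\det d\exp_q$ vanishes only at $v \in \pi\mathbb{Z}$ (the conjugate times of the oscillator) and on $\{H=0\}$, both excluded from $D_q$; injectivity then follows from the explicit formulas, because they determine $v$ modulo $2\pi$ and the constraint $|v|<\pi$ selects a unique branch, analogously to the principal determination of $\arcsin$. Density and full measure of $\exp_q(D_q)$ are obtained by verifying that its complement is contained in a finite union of smooth one-dimensional submanifolds, namely the Grushin singular set $\{x=0\}$ together with the cut locus of $q$. Uniqueness of the minimizer from $q$ to any $p \in \exp_q(D_q)$ finally combines this bijectivity with the Pontryagin maximum principle, which guarantees that every minimizer lifts to a Hamiltonian trajectory; the threshold $|v|<\pi$ then rules out any competitor coming from outside $D_q$.

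I expect the main technical point to be the sharp identification of this threshold: proving that $|v|=\pi$ is precisely the cut time requires an explicit length comparison between the candidate geodesic and its symmetric competitor under the reflection $v \mapsto -v$ of the covector fibre, which is a computation tied to the specific Grushin geometry. Once this sharp threshold is established, all the properties in the statement follow by straightforward bookkeeping from the explicit formula for $\exp_q$.
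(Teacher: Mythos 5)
Your plan follows the standard route (explicit integration of the flow, smooth extension across $v=0$, non-degeneracy plus injectivity on $D_q$, identification of $|v|=\pi$ as the sharp threshold, PMP for uniqueness), which is also how the paper treats this proposition (explicit formulas in Section~\ref{s:exponentialmap}, cut-locus description in Appendix~\ref{s:appendix}, details deferred to the references). However, two of your key claims are wrong as stated, and they sit exactly where the content of the proposition lies. First, the Jacobian of $\exp_q$ does \emph{not} vanish ``only at $v\in\pi\mathbb{Z}$ and on $\{H=0\}$'': from \eqref{eq:jacdet}, at $v=\pi$ one finds $J=u^2/\pi^2>0$ whenever $u\neq0$, while for $x=0$ the determinant vanishes where $\tan v=v$ (first positive root $\approx 4.49$, not a multiple of $\pi$). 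The boundary $|v|=\pi$ of $D_q$ is a cut phenomenon, not (except for $u=0$) a conjugate one, so the harmonic-oscillator heuristic does not give what you need. The fact you actually need, $J>0$ on $D_q$ and along the rays $t\mapsto(tu,tv)$, is true but requires a computation: e.g.\ view the numerator of \eqref{eq:jacdet} as a quadratic in $u$, whose leading coefficient $\sin v-v\cos v$ is positive on $(0,\pi)$ and whose discriminant $x^2v^2\sin v\,(v^2\sin v-4\sin v+4v\cos v)$ is nonpositive there.

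Second, the symmetry that produces the cut is $u\mapsto-u$, not $v\mapsto-v$: the endpoint formulas show that the two rays with covectors $(\pm u,v)$ reach the same point exactly when $tv=\pi$, and with the same length, which is precisely the description in Appendix~\ref{s:appendix}; by contrast $(u,v)\mapsto(u,-v)$ reflects the trajectory across the horizontal line through $q$ and gives a \emph{different} endpoint, hence no competitor and no length comparison at all. Moreover for $u=0$ (and $x\neq0$) this symmetry degenerates and the loss of minimality at $|v|=\pi$ comes instead from a conjugate point, since $J(x,y,0,v)=x^2\sin(v)/v$ vanishes at $v=\pi$; your argument needs this separate case. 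Finally, two steps are asserted rather than proved: injectivity of $\exp_q$ on $D_q$ does not follow from ``$v$ is determined modulo $2\pi$ and the principal branch is selected'' (nothing in the formulas gives such a determination; global injectivity here is equivalent to the determination of the cut locus and must be argued, e.g.\ via the explicit case analysis $q=(0,0)$, $q=(1,0)$ and the isometries of $\G$ used in the appendix, or by citing the references), and the closing claim that ``the threshold $|v|<\pi$ rules out any competitor from outside $D_q$'' presupposes exactly the cut-time statement you are trying to establish, so as written it is circular until the symmetry argument is repaired.
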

\begin{rmk}
The topology of $D_q$ is different depending on whether the initial point $q$ is located on the $y$-axis or not. This is related with the degeneracy of $H$ and the occurrence of singular geodesics in almost-Riemannian geometry. In particular, when $q=(0,y)$, then any curve $\exp_q(t\lambda)$, with $H(\lambda)=0$, corresponds to the same, trivial (and singular) geodesic. For this reason we remove the corresponding set of covectors from $D_q$.
\end{rmk}
Another known fact is the following, which is a result of the explicit structure of geodesics and the cut-locus of $\G$ (see Appendix~\ref{s:appendix}). 
\begin{prop}
The closed half-planes $\R_\pm^2 = \{(x,y) \in \R^2 \mid \pm x \geq 0\}$ are convex subsets of the Grushin plane: for all pairs $q,p \in \R_\pm^2$ there exists a unique $\gamma \in \mathrm{Geo}(\G)$ joining them and whose support lies in $\R_\pm^2$.
\end{prop}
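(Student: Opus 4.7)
The main tool is the $d$-isometry $\sigma(x,y) = (-x, y)$ of $(\G, d)$: it preserves the orthonormal frame $\{X_1, X_2\}$ up to sign and hence the length functional on admissible curves, while exchanging $\R_+^2$ and $\R_-^2$ and fixing their common boundary $\{x=0\}$. By this symmetry it suffices to prove the proposition for $\R_+^2$, so fix $q, p \in \R_+^2$.

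For existence, I would proceed by a folding argument. By completeness of $(\G, d)$ there exists a minimizing $\gamma \in \mathrm{Geo}(\G)$ from $q$ to $p$, and one defines
\[
\tilde{\gamma}(t) := \begin{cases} \gamma(t), & \text{if } \gamma(t) \in \R_+^2,\\ \sigma(\gamma(t)), & \text{if } \gamma(t) \in \R_-^2. \end{cases}
\]
Since $\sigma$ fixes $\{x=0\}$, $\tilde\gamma$ is continuous; since $\sigma$ is a $d$-isometry, $\tilde\gamma$ has the same length as $\gamma$ and the same constant-speed parametrization. Hence $\tilde\gamma \in \mathrm{Geo}(\G)$ joins $q$ to $p$ with image in $\R_+^2$.

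For uniqueness, I would split into two cases. When $p \in \exp_q(D_q)$, Proposition~\ref{p:grushingeod} yields a unique element of $\mathrm{Geo}(\G)$ from $q$ to $p$; applying the folding construction to it, this unique geodesic must coincide with its folded image, so it lies in $\R_+^2$ and is the only $\R_+^2$-geodesic from $q$ to $p$. The remaining points $p$ lie in the closed, measure-zero complement $\G \setminus \exp_q(D_q)$, namely the cut locus of $q$, where multiple minimizing geodesics in $\G$ may coexist and one has to rule out that two of them are both contained in $\R_+^2$. For this I would invoke the explicit integration of the Grushin Hamiltonian flow (Section~\ref{s:exponentialmap} and Appendix~\ref{s:appendix}) and check, directly in coordinates, that distinct initial covectors $\lambda_1, \lambda_2 \in T_q^*\G$ whose exponentials both stay in $\R_+^2$ on $[0,1]$ necessarily produce distinct endpoints.

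The delicate step is precisely this last verification: the folding procedure alone exhibits \emph{some} geodesic in $\R_+^2$, but does not preclude the existence of a second, distinct one at cut-locus endpoints. The main obstacle is therefore to establish injectivity of $\exp_q$ when restricted to the cone of covectors generating $\R_+^2$-valued trajectories; this is the most computational part, and it must be carried out using the explicit parametrization of the exponential map and the explicit structure of the cut locus described in Appendix~\ref{s:appendix}.
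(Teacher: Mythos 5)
Your reflection-and-folding argument is sound as far as it goes: $\sigma(x,y)=(-x,y)$ is indeed an isometry of $\G$, the folded curve $\tilde\gamma$ is an admissible constant-speed curve of the same length, hence a geodesic with support in $\R_+^2$, and your case ``$p\in\exp_q(D_q)$'' correctly combines folding with the uniqueness statement of Proposition~\ref{p:grushingeod}. This is close in spirit to the paper, which simply derives the proposition from the explicit geodesic and cut-locus structure recalled in Appendix~\ref{s:appendix}, and your folding step is a pleasant, more self-contained way to obtain existence and the off-cut-locus uniqueness.

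The genuine gap is the remaining case $p\in\mathrm{Cut}(q)$: there you only announce that one should ``check directly in coordinates'' that distinct covectors generating $\R_+^2$-valued minimizers have distinct endpoints, and this unperformed check is precisely the entire content of the statement beyond Proposition~\ref{p:grushingeod} --- it is exactly what the paper extracts from the known cut-locus description. Moreover, framing it as an injectivity computation for $\exp_q$ on a cone of covectors overstates the work needed: the description in Appendix~\ref{s:appendix} closes the case in two lines. If $q=(x,y)$ with $x>0$, then $\mathrm{Cut}(q)$ is contained in the vertical line $\{x'=-x\}$, which is disjoint from $\R_+^2$, so for $p\in\R_+^2$ this case simply does not occur (the boundary points of the cut locus, reached by a unique geodesic, also lie at $x'=-x$). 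If instead $q$ lies on the $y$-axis, then $\mathrm{Cut}(q)$ is the $y$-axis and the two minimizers from $q$ to a cut point $p$ are the mirror pair corresponding to $\pm u$, one with $x_t\geq 0$ and the other with $x_t\leq 0$ for all $t$; hence exactly one of them has support in $\R_+^2$, giving uniqueness. Without spelling out this (easy, but necessary) dichotomy, your proposal reduces the proposition to an unverified claim rather than proving it.
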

Hence, the restriction of the Grushin distance to the half-planes defines a complete, locally compact, length metric space structure $\G_\pm = (\R_\pm^2,d)$, where the length functional is given by the restriction of the Grushin length to admissible curves whose support is contained in $\R^2_\pm$. Furthermore, all geodesics of $\G_\pm$ are precisely the geodesics of $\G$ whose support is contained in the corresponding half-plane.

Therefore, Proposition~\ref{p:grushingeod} holds true for $\G_\pm$, replacing $D_q$ with the sets
\begin{equation}
D_q^\pm = D_q \cap \exp_q^{-1}(\R^2_\pm), \qquad \forall q \in \G_\pm.
\end{equation}

We equip $\G$ and $\G_\pm$ with the Lebesgue measure inherited as subsets of $\R^2$. In particular $\G = (\R^2,d,\mathcal{L})$ and $\G_\pm = (\R^2_\pm,d,\mathcal{L})$ are smooth m.m.s. with boundary which, for $\G_\pm$, consists in the $y$-axis and has zero measure.

\subsection{Metric measure double}

Given two metric spaces $(X,d_X)$, $(Y,d_Y)$, two subsets $X' \subset X$, $Y' \subset Y$, and an isometry $f : X' \to Y'$, their gluing is the metric space $(Z,d_Z)$, where $Z = (X \sqcup Y)/(x \sim f(x))$, and
\begin{equation}
d_Z(p,q) = \begin{cases}
d_X(p,q) & p,q \in X, \\
d_Y(p,q) & p,q \in Y, \\
\inf_{a \in X'} d_X(p,a) + d_Y(f(a),q) & p \in X,\, q \in Y, \\
\inf_{a \in X'} d_Y(p,f(a)) + d_X(a,q) & p \in Y,\, q \in X.
\end{cases}
\end{equation}
In this case, we identify $X$, $Y$ and $X'=Y'$ isometrically as subsets of $Z$. 

If $(X,d_X,\mis_X)$ and $(Y,d_Y,\mis_Y)$ are m.m.s., equipped with positive Borel measures $\mis_X$ and $\mis_Y$ such that $\mis_X(X') = \mis_Y(Y') = 0$, then we can define a positive Borel measure on $(Z,d_Z)$ by setting, for all Borel sets $B \subset Z$,
\begin{equation}\label{eq:measure}
\mis_Z(B): = \mis_X(B \cap X) + \mis_Y(B \cap Y).
\end{equation}
The double of a m.m.s.\ $(X,d,\mis)$ along a zero-measure subset $X' \subset X$ is defined as the gluing of two copies of $(X,d)$ via the isometry $\mathrm{id} : X' \to X'$, equipped with the measure defined in \eqref{eq:measure}.

\medskip

Consider now the Grushin plane $\mathrm{G}$ and half-planes $\G_\pm$. Clearly, the gluing of $\G_+$ and $\G_-$ along their boundary is isomorphic to the double of $\G_\pm$. This construction recovers the original Grushin structure.

\begin{prop}\label{p:gluingrush}
The gluing of the two Grushin half-planes $\G_\pm$, equipped with their Lebesgue measure, via the trivial isometry $\mathrm{id} : \partial \G_- \to \partial \G_+$ of their boundaries is the Grushin plane $\G$, equipped with the Lebesgue measure.
\end{prop}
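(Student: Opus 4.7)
The plan has three parts—set-theoretic identification, agreement of measures, and agreement of distances—of which only the last requires work.

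The gluing $(\R^2_+ \sqcup \R^2_-)/(\mathrm{id}:\partial\G_- \to \partial\G_+)$ is canonically identified with $\R^2$, with the two copies of the $y$-axis becoming the common $y$-axis. Since this axis is Lebesgue-negligible, the formula \eqref{eq:measure} gives $\mis_Z(B) = \mathcal{L}(B\cap\R^2_+) + \mathcal{L}(B\cap\R^2_-) = \mathcal{L}(B)$ for every Borel $B\subset\R^2$, so the glued measure coincides with $\mathcal{L}$.

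For the metric identity $d_Z = d$ (the Grushin distance), the case of $p,q$ in the same closed half-plane is immediate from the convexity of $\R^2_\pm$ in $\G$ recorded in the preceding proposition, which ensures that the intrinsic half-plane distance agrees with the restriction of $d$. The substantive case is $p\in\R^2_+$ and $q\in\R^2_-$ with $p_x>0>q_x$. For $d \leq d_Z$, I would apply the triangle inequality in $\G$ at an arbitrary boundary point $a$, use convexity to rewrite $d(p,a) = d_{\G_+}(p,a)$ and $d(a,q) = d_{\G_-}(a,q)$, and take the infimum over $a$. For the reverse inequality, any admissible curve $\gamma$ from $p$ to $q$ has sign-changing $x$-coordinate, so by continuity there is some $t_0$ with $a := \gamma(t_0)$ on the $y$-axis; splitting the curve at $t_0$ and again using convexity,
\begin{equation}
\ell(\gamma) \geq d(p,a) + d(a,q) = d_{\G_+}(p,a) + d_{\G_-}(a,q) \geq d_Z(p,q),
\end{equation}
and the infimum over $\gamma$ yields $d(p,q) \geq d_Z(p,q)$.

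No step is genuinely delicate: the whole argument rests on convexity of the half-planes (stated just above) and the elementary observation that a continuous curve cannot travel between the two open half-planes without meeting the $y$-axis. In particular, the singular and almost-Riemannian features of the Grushin structure play no role in this reduction; the same proof would work for the doubling of any convex closed half-space in a length space along its boundary.
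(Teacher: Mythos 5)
Your proof is correct and amounts to the same approach as the paper: the paper dismisses the statement as immediate, since the metric and measure structures on $\G_\pm$ are restrictions of those of $\G$, and delegates the metric identity to a standard gluing lemma in the metric-geometry literature. Your argument simply proves that cited fact by hand in this special case (the measure identity from negligibility of the $y$-axis, the inequality $d\leq d_Z$ from the triangle inequality plus convexity of $\R^2_\pm$, and $d_Z\leq d$ by splitting any admissible curve with $p_x>0>q_x$ at a crossing of the $y$-axis), which is exactly the content being invoked.
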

\begin{proof}
This is immediate, since the metric and measure structures on $\G_\pm$ are obtained by restriction of the ones of $\G$. For a proof of the metric part, see e.g.\ \cite[Lemma 5.24(4)]{bookmetric}. 
\end{proof}

\subsection{Exponential map}\label{s:exponentialmap}
We give now the explicit formula for the exponential map of the Grushin structure, which will be needed for the proofs. Letting $q =(x,y)$ and $\lambda =(u,v) \in T^*_q \G$ we have $\exp_q(t \lambda) = (x_t,y_t)$ with
\begin{align}
x_t & = x \cos(t v) + u \frac{\sin(t v)}{v}, \\
y_t & = y + \frac{\sin (2 t v) \left(v^2 x^2-u^2\right)+2 v \left(t \left(v^2 x^2+u^2\right)+u x-u x \cos (2 t v)\right)}{4 v^2}.
\end{align}
The above formulas are well defined by their real-analytic continuation at $v=0$, that is $x_t = u t$ and $y_t = y$. With the same convention, the Jacobian determinant of the exponential map is
\begin{equation}\label{eq:jacdet}
J(x,y,u,v) = 
\frac{\left(u^2+ u v^2 x+v^2 x^2\right) \sin ( v) - u^2 v \cos (v)}{v^3}.
\end{equation}
We stress that, for all $q=(x,y) \in \G$, $(u,v) \in D_q$, and $t \in [0,1]$, we have $J(x,y,tu,tv) >0$, for all $t \in [0,1]$. This means that rays $[0,1]\ni t \mapsto (tu,tv) \in T_q \G$ are all regular points of the exponential map.

\section{Proof of the main result}

We refer to \cite{Ohta1} for the definition of $\mathrm{MCP}(K,N)$ for $K \neq 0$. Recall that $\mathrm{MCP}(K,N)$ implies $\mathrm{MCP}(K',N')$ for $K'< K$ and $N'>N$ (see \cite[Lemma 2.4]{Ohta1}).  Since complete $\mathrm{MCP}(K,N)$ spaces with $K>0$ are bounded, the Grushin plane or half-planes can only satisfy $\mathrm{MCP}(K,N)$ for $K \leq 0$. The following fact was proved in \cite[Theorem 10]{BR-inequalities}.
\begin{theorem}\label{t:1}
The Grushin plane $\G$, equipped with the Lebesgue measure, satisfies the $\mathrm{MCP}(K,N)$ if and only if $K \leq 0$ and $N \geq 5$.
\end{theorem}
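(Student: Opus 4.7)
The plan is to reduce $\mathrm{MCP}(K,N)$ to a pointwise Jacobian inequality along the Grushin exponential and analyze it via \eqref{eq:jacdet}. First, since complete $\mathrm{MCP}(K,N)$ spaces with $K > 0$ are bounded while $\G$ is unbounded, necessarily $K \leq 0$; by monotonicity it suffices to prove $\mathrm{MCP}(0,5)$ and the failure of $\mathrm{MCP}(0,N)$ for every $N < 5$, the general $K \leq 0$ cases then following because the distortion coefficients in the $\mathrm{MCP}(K,N)$ definition degenerate to $t$ in the infinitesimal distance limit. Since the complement of $\exp_q(D_q)$ is Lebesgue negligible by Proposition~\ref{p:grushingeod}, the map $\phi$ realizing $\mathrm{MCP}$ must coincide a.e.\ with the Grushin exponential flow from $q$, i.e.\ $\phi_t(\exp_q(\lambda)) = \exp_q(t\lambda)$. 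For $A = \exp_q(\Omega)$ with $\Omega \subset D_q$, the change of variables yields $\mis(A) = \int_\Omega J(q,\lambda)\, d\lambda$ and $\mis(\phi_t(A)) = \int_\Omega t^2 J(q, t\lambda)\, d\lambda$, so $\mathrm{MCP}(0,N)$ is equivalent to the pointwise bound
\begin{equation*}
J(q, t\lambda) \geq t^{N-2} J(q, \lambda), \qquad \forall q \in \G,\ \lambda \in D_q,\ t \in [0,1].
\end{equation*}

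Setting $h(v) := \sin v - v \cos v$, the formula \eqref{eq:jacdet} rewrites as $J(x,y,u,v) = u^2 h(v)/v^3 + x(u+x)\sin(v)/v$, so the ratio $J(q,t\lambda)/J(q,\lambda)$ depends, for $x \neq 0$, only on $\xi := u/x$, $t$, and $v$; call it $F(\xi,t,v)$. The singular case $x = 0$ reduces to $F = h(tv)/(t\,h(v))$, and since $v \mapsto h(v)/v^3$ is decreasing on $(0,\pi)$ this is bounded below by $t^2$, imposing only $N \geq 4$. The sharp constraint $N = 5$ emerges from the limit $v \to 0^+$ with $x \neq 0$, where $F$ becomes the quadratic fraction $(t^2\xi^2/3 + t\xi + 1)/(\xi^2/3 + \xi + 1)$, whose infimum over $\xi \in \R$, determined by the tangency condition of its resolvent, equals
\begin{equation*}
r(t) := 2t^2 - 3t + 2 - 2(1-t)\sqrt{t^2 - t + 1}.
\end{equation*}
A Taylor expansion at $t = 1$ gives $r(t) - t^3 = (1-t)^3/4 + O((1-t)^4)$, from which one concludes $r(t) \geq t^3$ on $[0,1]$ with equality only at $t \in \{0, 1\}$.

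Sufficiency for $N=5$ reduces to extending $F(\xi,t,v) \geq t^3$ from $v \to 0^+$ to all $v \in (0,\pi)$, which is the main technical obstacle: my plan is to show that $\inf_\xi F(\xi, t, v)$ is non-decreasing in $v$, exploiting the simultaneous monotonic decrease of $h(v)/v^3$ and $\sin(v)/v$ on $(0,\pi)$, or alternatively by a direct algebraic verification using the quadratic resolvent. For necessity with $N < 5$, the expansion $t^{N-2} - t^3 = (5-N)(1-t) + O((1-t)^2)$ dominates the cubic gap $r(t) - t^3 \sim (1-t)^3/4$ near $t=1$, so $r(t) < t^{N-2}$ for $t$ close to $1$; by continuity the Jacobian inequality then fails at a configuration with small positive $v$ and $\xi$ close to the minimizer. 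Picking $q = (x,0)$ with $x$ small, $\lambda \in D_q$ realizing this, and $A$ a small ball about $\exp_q(\lambda)$, produces a violation of $\mathrm{MCP}(0,N)$; letting $x \to 0$ in tandem forces $d(q, \exp_q(\lambda)) \to 0$, so the same construction defeats $\mathrm{MCP}(K,N)$ for every $K \leq 0$.
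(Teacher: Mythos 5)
Your overall strategy (reduce $\mathrm{MCP}$ to the pointwise Jacobian bound \eqref{eq:toprove} via Proposition~\ref{p:grushingeod}, treat $x=0$ separately, and extract the sharp exponent from the $v\to 0^+$ family of horizontal geodesics crossing the singular line) is the right one, and it parallels how the paper proves Theorem~\ref{t:2}; note that for Theorem~\ref{t:1} itself the paper offers no proof but cites \cite[Theorem 10]{BR-inequalities}. Your necessity part is essentially correct: the computation of $r(t)=2t^2-3t+2-2(1-t)\sqrt{t^2-t+1}$ as $\inf_\xi$ of the limiting quadratic fraction is right, and the expansion at $t=1$ does show $r(t)<t^{N-2}$ for $N<5$, which, combined with the $v=0$ geodesics being minimizing for all time and the a.e.\ uniqueness of the contraction map, kills $\mathrm{MCP}(0,N)$ for $N<5$ (and, with your small-scale remark or, more cleanly, with the paper's dilation $\delta_\varepsilon(x,y)=(\varepsilon x,\varepsilon^2 y)$ from Step 1 of the proof of Theorem~\ref{t:2}, for all $K\le 0$). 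Two repairs are needed even here: a Taylor expansion at $t=1$ cannot justify the \emph{global} claim $r(t)\ge t^3$ on $[0,1]$; fortunately it is true and elementary, since $2t^2-3t+2-t^3=(1-t)(t^2-t+2)$ reduces the claim to $t^2-t+2\ge 2\sqrt{t^2-t+1}$, i.e.\ $(\sqrt{t^2-t+1}-1)^2\ge 0$.

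The genuine gap is the sufficiency of $N=5$ for $v\neq 0$, which you yourself flag as ``the main technical obstacle'' and for which you only state a plan. The proposed mechanism --- monotonicity in $v$ of $\inf_\xi F(\xi,t,v)$ deduced from the simultaneous decrease of $P(v)=(\sin v-v\cos v)/v^3$ and $Q(v)=\sin(v)/v$ --- is not a proof and does not follow from those monotonicities alone, because $F$ mixes $P,Q$ evaluated at $tv$ in the numerator against $P,Q$ evaluated at $v$ in the denominator with $\xi$-dependent weights: explicitly, $F(\xi,t,v)\ge t^3$ for all $\xi\in\R$ is equivalent to the nonnegativity of the quadratic $\bigl[t^2P(tv)-t^3P(v)\bigr]\xi^2+\bigl[tQ(tv)-t^3Q(v)\bigr]\xi+\bigl[Q(tv)-t^3Q(v)\bigr]$, i.e.\ to a two-variable discriminant inequality in $(t,v)$ that still has to be established; this is exactly the content of the cited result of \cite{BR-inequalities} and is where all the remaining work lies. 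Nor can the paper's Lemma~\ref{l:ineq} be recycled: its passage to the ``easier inequality'' \eqref{eq:easier} crucially uses the half-plane constraint \eqref{eq:conditionina} on the admissible covectors, whereas for the full plane the corresponding quadratic in $a$ must be controlled over all $a\in\R\setminus\{0\}$ with no such constraint (indeed the conclusion changes from $N\ge 4$ to $N\ge 5$). As written, your argument therefore proves the ``only if'' direction ($K\le 0$ and $N\ge 5$ are necessary) but not the ``if'' direction.
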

As a matter of fact, the bound $N \geq 5$ in the proof of \cite{BR-inequalities} comes from contraction along geodesics crossing the singular region $\{x=0\}$, which are absent in the half-plane. Hence, one could hope that the latter satisfy a stronger $\mathrm{MCP}(0,N)$. This is the case, and we have the following.
\begin{theorem}\label{t:2}
The Grushin half-planes $\G_\pm$, equipped with the Lebesgue measure, satisfy the $\mathrm{MCP}(K,N)$ if and only if $K \leq 0$ and $N \geq 4$.
\end{theorem}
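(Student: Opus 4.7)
The plan is to verify the MCP condition via a pointwise Jacobian inequality along exponential rays, using the explicit formula~\eqref{eq:jacdet} and a key factorization discussed below. First, $K\leq 0$ is forced: complete $\mathrm{MCP}(K,N)$ spaces with $K>0$ are bounded, while $\G_\pm$ is unbounded; monotonicity in $K$ then reduces the task to $K=0$. Fix $q=(x,y)\in\G_\pm$ and take the selection $\phi_t(z):=\exp_q(t\exp_q^{-1}(z))$ on the full-measure open set $\exp_q(D_q^\pm)$. By the convexity of $\R_\pm^2$ in $\G$ (recalled earlier), $\phi_t$ maps $\G_\pm$ into itself, and $\phi_0=q$, $\phi_1=\mathrm{id}$. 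A change of variables against the Lebesgue measure turns $\mathrm{MCP}(0,N)$ into the pointwise inequality $J(x,y,tu,tv)\geq t^{N-2}J(x,y,u,v)$ for $(u,v)\in D_q^\pm$ and $t\in[0,1]$.

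With $\tau:=tv$, the explicit formula gives $J(x,y,tu,tv)=Q(\tau)/(\tau v^2)$ where
\[ Q(\tau)\ :=\ \bigl(u^2+uvx\,\tau+(vx)^2\bigr)\sin\tau\ -\ u^2\tau\cos\tau, \]
so the $\mathrm{MCP}(0,4)$ condition rewrites as $v^3 Q(\tau)\geq \tau^3 Q(v)$ on $\tau\in[0,|v|]$, and a clean sufficient form is the monotonicity of $\tau\mapsto Q(\tau)/\tau^3$, equivalently $3Q(\tau)-\tau Q'(\tau)\geq 0$. The identity
\[ Q'(\tau)\ =\ (vx+u\tau)\bigl(u\sin\tau+vx\cos\tau\bigr)\ =\ (vx+u\tau)\,v\,x_\tau, \]
where $x_\tau=x\cos\tau+u\sin\tau/v$ is the $x$-coordinate of the geodesic at time $t=\tau/v$, will be the main tool.

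Necessity of $N\geq 4$ then follows from the boundary point $q=(0,y)$: there $Q(\tau)=u^2(\sin\tau-\tau\cos\tau)$ and the condition reads $h(\tau)\geq(\tau/v)^{N-4}h(v)$ with $h(\tau)=(\sin\tau-\tau\cos\tau)/\tau^3$; since $h(0^+)=1/3$ is finite and positive, letting $\tau\to 0^+$ at fixed $v$ forces $N\geq 4$.

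For sufficiency at $N=4$, expanding $3Q(\tau)-\tau Q'(\tau)$ yields a quadratic form in $(u,vx)$ with coefficients
\[ A=(3-\tau^2)\sin\tau-3\tau\cos\tau,\quad 2B=\tau(2\sin\tau-\tau\cos\tau),\quad C=3\sin\tau-\tau\cos\tau, \]
which are all non-negative on $(0,\pi]$, but whose determinant $AC-B^2$ fails to be non-negative in general (it is negative already at $\tau=\pi/2$). The strategy is to use the convexity constraint $(u,v)\in D_q^+$, which translates into $u\geq -xv\cot v$ (equivalently $x_\tau\geq 0$ along the whole ray), and to verify via trigonometric manipulations and a vertex analysis of the quadratic in $u$ that on this admissible range $3Q-\tau Q'\geq 0$ for every $\tau\in[0,|v|]$. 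The main obstacle is precisely this verification in the subcase $u<0$: naive term-by-term or Cauchy–Schwarz estimates break down, and the convexity constraint—the exact geometric input that excludes the axis-crossing geodesics responsible for the weaker $N\geq 5$ bound in the full plane—must be used in an essential way.
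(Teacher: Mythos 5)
Your setup is sound and essentially coincides with the paper's: the reduction of $\mathrm{MCP}(0,N)$ to the Jacobian inequality \eqref{eq:toprove}, the substitution $\tau=tv$, and the quadratic $A u^2+2B\,uvx+C\,(vx)^2$ obtained from $3Q(\tau)-\tau Q'(\tau)$ are, after dividing by $u^2$ and setting $a=vx/u$, exactly the paper's inequality \eqref{eq:troiaboia} with $A=c_0$, $2B=\tau c_1$, $C=c_2$; your necessity argument (boundary point $q=(0,y)$, letting $t\to 0^+$ against $h(\tau)=(\sin\tau-\tau\cos\tau)/\tau^3$ with $h(0^+)=1/3$) is a valid and slightly cleaner variant of the paper's horizontal-ray computation \eqref{eq:critical}. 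However, the proof of the ``if'' direction is not actually completed: for $N=4$ you only announce ``trigonometric manipulations and a vertex analysis'' under the constraint $u\geq -xv\cot v$, and you yourself flag the subcase $u<0$ (equivalently $a<0$, the geodesics heading towards the $y$-axis) as an unresolved obstacle. Since, as you observe, $AC-B^2<0$ in general, no unconstrained discriminant argument can work, and this constrained case is precisely the crux where the half-plane differs from the full plane. The paper closes it with a short explicit estimate which your proposal is missing: on $(0,\pi)$ one has $\sin v-v\cos v\geq 0$, hence $c_0(v)\geq 0$ and $c_2(v)\geq 2v\cos v$, so \eqref{eq:troiaboia} is implied by $2a^2\cos v+a(2\sin v-v\cos v)\geq 0$; writing the left-hand side as $2(a^2\cos v+a\sin v)-av\cos v$, the first term is nonnegative by the admissibility condition \eqref{eq:conditionina}, and for $a<0$ that same condition forces $\cos v>0$, so $-av\cos v\geq 0$ as well (the case $a>0$ being immediate). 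Without an argument of this kind, the central verification of $\mathrm{MCP}(0,4)$ is left open.

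There is a second, smaller gap in your reduction in $K$. Monotonicity only gives that $\mathrm{MCP}(0,4)$ implies $\mathrm{MCP}(K,N)$ for $K\leq 0$, $N\geq 4$; it does \emph{not} exclude that $\G_\pm$ might satisfy $\mathrm{MCP}(K,N)$ for some $K<0$ and $N<4$, because $\mathrm{MCP}(K,N)$ with $K<0$ is weaker than $\mathrm{MCP}(0,N)$. To obtain the full ``only if'' you need the self-similarity of the Grushin structure: the anisotropic dilations $\delta_\varepsilon(x,y)=(\varepsilon x,\varepsilon^2 y)$ are isomorphisms onto the rescaled spaces, so $\mathrm{MCP}(K,N)$ implies $\mathrm{MCP}(\varepsilon^2K,N)$ for every $\varepsilon>0$, and letting $\varepsilon\to 0^+$ yields $\mathrm{MCP}(0,N)$, after which your necessity argument at $K=0$ applies. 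This scaling step should be added to make the statement's equivalence complete.
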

By Proposition~\ref{p:gluingrush} and Theorems~\ref{t:1}-\ref{t:2}, the Grushin half-plane is the counterexample to Perelman's doubling theorem claimed in Theorem~\ref{t:nopet}, with $N=5$. Hence, it only remains to prove Theorem~\ref{t:2}. 

\subsection{Proof of Theorem~\ref{t:2}}

\textbf{Step 1.\ Reduction to the case $K=0$.} We remarked that $\mathrm{MCP}(0,N)$ implies $\mathrm{MCP}(K,N)$ for all $K < 0$. On the contrary, assume that $\G$ satisfies the $\mathrm{MCP}(K,N)$ for some $K < 0$ and $N>1$. The scaled spaces $\G^\varepsilon = (\R^{2}, \varepsilon^{-1}d,\varepsilon^{-3}\mathcal{L})$ verify the $\mathrm{MCP}(\varepsilon^2 K,N)$ by \cite[Lemma 2.4]{Ohta1}. But $\G$ and $\G^\varepsilon$ are isomorphic through the dilation $\delta_\varepsilon(x,y) = (\varepsilon x, \varepsilon^2 y)$, with $\varepsilon>0$, hence $\G$ satisfies the $\mathrm{MCP}(\varepsilon^2 K, N)$ for all $\varepsilon>0$. Taking the limit $\varepsilon \to 0^+$ we obtain that $\G$ satisfies the $\mathrm{MCP}(0,N)$. We conclude that $\mathrm{MCP}(K,N) \Leftrightarrow \mathrm{MCP}(0,N)$ for $\G$ and $\G_\pm$. It is left to prove that, for $\G_\pm$, the $\mathrm{MCP}(0,N)$ is verified for $N =4$ and false for $N<4$.

\textbf{Step 2.\ Proof of the case $K=0$.} Let $q \in X$, where $X= \G_\pm$ (but nothing changes in the first part of the proof if $X=\G$, replacing $D_q^\pm$ with $D_q$). By Proposition~\ref{p:grushingeod}, up to modification on a negligible set, there exists a unique measurable map $\phi : X \to \mathrm{Geo}(X)$ such that, letting $\phi_t = e_t \circ \phi$, one has $\phi_0 = q$, $\phi_1 = \mathrm{id}_X$. Indeed, if $p = \exp_q(\lambda)$, then $\phi_t(p) = \exp_q(t\lambda)$. 


Since $\phi_t$ is, up to a negligible set, a smooth diffeomorphism, the proof of the $\mathrm{MCP}(0,N)$ is equivalent to an inequality for the Jacobian determinant of the Grushin (half-)plane. Up to a negligible set, any Borel set $A \subset X$ can be written as $A = \exp_q(\bar{A})$ for $\bar{A} \subset T_q^*X$. Then,
\begin{align}
\mis(\phi_t(A)) & = \int_{\exp_q(t\bar{A})} dx dy = \int_{\bar{A}} t^2 J(x,y,tu,tv)\, du dv \\
& =t^2 \int_{\bar{A}} \frac{J(x,y,tu,tv)}{J(x,y,u,v)} J(x,y,u,v)\, du dv \\
& \geq t^2 \inf_{u,v} \frac{J(x,y,tu,tv)}{J(x,y,u,v)}\, \mis(A), \qquad \forall t \in [0,1],
\end{align}
where the infimum is computed over all $(u,v) \in D_q$ if $X=\G$ (resp.\ $D_q^\pm$ if $X = \G_\pm$). Therefore, the proof (or the negation) of the $\mathrm{MCP}(0,N)$ is equivalent to the proof (or the negation) of the inequality
\begin{equation}\label{eq:toprove}
\frac{J(x,y,tu,tv)}{J(x,y,u,v)} \geq t^{N-2}, \qquad \forall t \in [0,1],
\end{equation}
for all $(x,y) \in \R^2$ (resp.\ $\R_\pm^2$) and $u,v \in D_q$ (resp.\ $D_q^\pm$). We restrict now to the Grushin half-planes, and Theorem~\ref{t:2} follows from the next lemma.
\begin{lemma}\label{l:ineq}
Inequality \eqref{eq:toprove} holds true for all $t \in[0,1]$, $(x,y) \in \R^2_\pm$ and $(u,v) \in D_q^\pm$ if $N =4$, and is false if $N<4$.
\end{lemma}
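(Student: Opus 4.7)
The plan is to recast \eqref{eq:toprove} with $N=4$ as the non-negativity on $[0,1]$ of
\[
P(t) := u^2\, \alpha(t) + v^2 x u\, \beta(t) + v^2 x^2\, \gamma(t),
\]
where $\alpha(t) := \sin(tv) - tv\cos(tv) - t^3(\sin v - v\cos v)$, $\beta(t) := t\sin(tv) - t^3\sin v$, and $\gamma(t) := \sin(tv) - t^3\sin v$; this follows by direct manipulation of \eqref{eq:jacdet}. For $v \in (0,\pi)$ and $t \in [0,1]$ each of $\alpha, \beta, \gamma$ is non-negative: monotonicity of $w \mapsto (\sin w - w\cos w)/w^3$ on $(0,\pi)$ handles $\alpha$, while $\beta, \gamma \geq 0$ follow from the concavity of $t \mapsto \sin(tv) - t^k \sin v$ on $[0,1]$ (for $k=2,3$) with matching zero boundary values. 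Hence the only possibly negative contribution to $P(t)$ is the cross term $v^2 xu\,\beta$, active when $u$ and $x$ have opposite signs. Using the $J$-symmetry $(x,u)\mapsto(-x,-u)$, I reduce to $x \geq 0$, and the only non-trivial case is $x>0$ with $u<0$.

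For this case I exploit the defining property of $D_q^+$: the $x$-coordinate of the geodesic $t \mapsto x\cos(tv) + u\sin(tv)/v$ stays non-negative on $[0,1]$. A short trigonometric analysis (writing the expression as $R\cos(tv-\theta)$ and checking that it has no interior critical points when $u<0$) shows this is equivalent to the endpoint constraint $|u| \leq xv\cot v$, which in particular forces $v \in (0,\pi/2)$ (for $v \geq \pi/2$ one has $\cot v \leq 0$, excluding $u<0$). Inserting this bound in the cross term and dropping the non-negative $\alpha u^2$ reduces $P(t) \geq 0$ to the one-variable inequality
\[
g(t) := \sin v\, \gamma(t) - v\cos v\, \beta(t) \;\geq\; 0 \qquad \text{on } [0,1].
\]

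The function $g$ satisfies $g(0)=g(1)=0$, and a direct computation yields
\[
g''(t) = -v^2\bigl[\,2\cos v \cos(tv) + \sin(tv)(\sin v - tv\cos v)\,\bigr] - 6t\sin v\cdot(\sin v - v\cos v).
\]
For $v \in (0, \pi/2)$ and $t \in [0,1]$ the factor $\sin v - tv\cos v$ decreases from $\sin v$ at $t=0$ to $\sin v - v\cos v > 0$ at $t=1$, hence is positive; all remaining factors are positive too, so $g''(t) \leq 0$. Concavity combined with the boundary values gives $g \geq 0$, completing the positivity part. For sharpness at $N<4$, I take $x=0$: then $J(0,u,v)=u^2(\sin v - v\cos v)/v^3$ and
\[
\frac{J(0,tu,tv)}{J(0,u,v)} = \frac{\sin(tv) - tv\cos(tv)}{t(\sin v - v\cos v)} = t^2 + O(v^2) \qquad \text{as } v \to 0^+,
\]
using $\sin w - w\cos w = w^3/3 - w^5/30 + O(w^7)$. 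Since $t^{N-2} > t^2$ on $(0,1)$ whenever $N < 4$, letting $v \to 0^+$ violates $\mathrm{MCP}(0,N)$.

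The main obstacle is the case $u<0$: without invoking the half-plane constraint the inequality $P(t) \geq 0$ is \emph{false} for small $t$, since the unconstrained minimum $u^\ast = -v^2 x\beta/(2\alpha)$ in $u$ is finite and gives $P(u^\ast) < 0$. Hence the improvement from $N=5$ in the full Grushin plane to $N=4$ in the half-plane rides entirely on the geometric fact that half-plane geodesics cannot attain $u^\ast$; once the bound $|u| \leq xv\cot v$ absorbs the bad cross term, what remains is a clean one-dimensional concavity argument.
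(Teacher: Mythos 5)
Your proof is correct, but it follows a genuinely different route from the paper's. The paper takes logarithmic derivatives, so that \eqref{eq:toprove} reduces to a pointwise inequality in the integration variable: after substituting $a=vx/u$ this is the quadratic-in-$a$ inequality \eqref{eq:troiaboia}, settled via the constraint in the form \eqref{eq:conditionina}; the case $v=0$ is treated first and separately through \eqref{eq:critical}, and it is also the paper's witness for failure when $N<4$. You instead clear denominators and keep the full time dependence: your $P(t)=u^2\alpha+v^2xu\,\beta+v^2x^2\gamma$ is exactly $t^3v^3\bigl(J(x,y,tu,tv)-t^4J(x,y,u,v)\bigr)$ for $v>0$, the sign analysis of $\alpha,\beta,\gamma$ isolates the cross term, and the half-plane condition enters through the endpoint form $|u|\le xv\cot v$ --- which is precisely \eqref{eq:conditionina} rewritten in the variables $(u,v)$, and which correctly forces $v<\pi/2$ when $u<0$ --- after which everything collapses to the concavity of a single function $g$ of $t$ with $g(0)=g(1)=0$. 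I verified the identity for $P$, the monotonicity of $w\mapsto(\sin w-w\cos w)/w^3$, the equivalence of the constraint with its endpoint version (also asserted in the paper, by convexity of $\G_+$), the reduction to $g$, and your formula for $g''$: all correct. The paper's log-derivative step buys a linearization of the $t$-dependence, leaving a one-parameter family of quadratics in $a$; your version avoids that step at the price of a two-variable function, which the constraint plus concavity handle cleanly. Your counterexample for $N<4$ ($x=0$, $u>0$, $v\to0^+$, ratio tending to $t^2<t^{N-2}$) differs from the paper's horizontal geodesics but is equally valid.

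Two small omissions should be patched. First, the decomposition into $P(t)$ presupposes $v\neq0$, while $D_q^\pm$ also contains covectors with $v=0$ (horizontal rays, with $J$ given by analytic continuation); this is in fact the paper's extremal case. It is covered either by the direct check that \eqref{eq:toprove} with $N=4$ and $v=0$ amounts to $3x(1-t)\bigl(tu+(1+t)x\bigr)\ge0$, which holds for $x\ge0$, $u\ge-x$, $t\in[0,1]$, or by continuity of $J$ at $v=0$, approximating $(u,0)\in D_q^+$ by points of $D_q^+$ with $v\to0^+$; either way, a sentence is needed. Second, you tacitly assume $v>0$; add the remark that $J$ is even in $v$ (or invoke the reflection isometry), as in the paper's ``by symmetry''.
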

\begin{proof}
Without loss of generality we consider $\G_+$, hence for $q=(x,y) \in \G_+$ we have $x\geq 0$. The argument is similar to \cites{R-MCP,BR-Htype}, with the necessary modifications for the Grushin plane.

\smallskip

\textbf{Characterization of $D_q^+$.} Recall that $D_q^+ = D_q \cap \exp_q^{-1}(\R_+^2)$ is characterized by the property that geodesic $\exp_q(t\lambda)$ with $\lambda=(u,v) \in D_q^+$ remain in $\R^2_+$. Using the explicit expressions for geodesics in Section~\ref{s:exponentialmap}, we obtain
\begin{equation}\label{eq:conditionD}
(u,v) \in D_q^+ \quad \Leftrightarrow \quad \begin{cases} (u,v) \neq (0,0), \\
|v| < \pi, \\
x \cos(t v) + u \frac{\sin(t v)}{v} \geq 0, \qquad  \forall t \in[0,1].
\end{cases}
\end{equation}
Notice that the last condition is true if and only if it is true for $t=1$ (this is a consequence of the convexity of $\G_+$). We consider two cases.

\textbf{Case $v=0$.} This case corresponds to straight horizontal rays $(x_t,y_t) = (x+tu,y) \in \G_+$ and yields the necessity of the lower bound $N\geq 4$. In this case the trigonometric terms disappear in the Jacobian determinant \eqref{eq:jacdet}, and inequality \eqref{eq:toprove} is
\begin{equation}\label{eq:toprovev0}
\frac{J(x,y,tu,0)}{J(x,y,u,0)} = \frac{t^2 u^2 +3t u x +3x^2}{u^2 +3ux + 3x^2} \geq t^{N-2}, \qquad \forall t \in[0,1],
\end{equation}
to be proved for all $(u,0) \in D_q^+$, that is for all non-zero $u \geq -x$. Both sides are strictly positive for all $t\in [0,1]$, hence \eqref{eq:toprovev0} is equivalent to the following
\begin{equation}
\int_{tu}^u \frac{d}{dz} \log f_{x}(z) \, dz \leq (N-2) \int_{tu}^u \frac{d}{dz}\log |z| \, dz, \quad \forall t \in (0,1),\; u \geq -x,
\end{equation}
where $f_{x}(u) := u^2 + 3 u x + 3 x^2$. This inequality is equivalent to the corresponding inequality for the integrands, which is:
\begin{equation}\label{eq:critical}
(N-4) u^2 + 3x (N-3) u + 3x^2(N-2) \geq 0, \qquad \forall x>0,\, u \geq -x.
\end{equation}
The above is clearly violated if $N < 4$, and easily verified if $N=4$. This implies the ``only if'' part of the statement. To conclude, we prove the inequality \eqref{eq:toprove} in the remaining case $v \neq 0$, fixing $N=4$.

\begin{rmk} We observe here that, for the verification of the $\mathrm{MCP}(0,N)$ in the case of the \emph{full} Grushin plane, everything so far would be the same, except for the absence of the constraint $z \geq -x$. In this case \eqref{eq:critical} is violated for $N=4$ for geodesics $(x_t,y_t) = (x +ut,y)$ that cross the $y$-axis horizontally and travel far enough in the opposite side. In this case one can verify that \eqref{eq:critical} is still verified but only if $N \geq 5$.
\end{rmk}

\textbf{Case $v \neq 0$.} By symmetry, we actually assume $v >0$. Let first $u =0$, corresponding to geodesics $(x_t,y_t) = (x \cos(tv),y+x^2(\sin(2vt) + 2vt))$, never crossing the $y$-axis. In this case
\begin{equation}
\frac{J(x,y,0,tv)}{J(x,y,0,v)}= \frac{\sin(t v)}{t \sin(v)} \geq 1 \geq t^{N-2}, \qquad \forall t \in [0,1].
\end{equation}
Consider now $u \neq 0$. We introduce the new variable 
\begin{equation}
a:= \frac{vx}{u} \in \R_0 = \R \setminus \{0\}.
\end{equation}
In terms of this new variable
\begin{equation}
\frac{J(x,y,tu,tv)}{J(x,y,u,v)} = \frac{f_a(t v)}{t f_a(v)}, \quad \text{where} \quad f_a(v):= (1+a v + a^2)\sin(v) - v \cos(v).
\end{equation}
It remains to prove that for $N=4$, and for all allowed values of $a$ and $v$ (with an abuse of notation, we write $(a,v) \in D_q^+$) we have
\begin{equation}\label{eq:remains}
\frac{f_a(tv)}{f_a(v)} \geq t^{N-1}, \qquad \forall t \in [0,1].
\end{equation}
Recalling that $0<v<\pi$, both sides of \eqref{eq:remains} are strictly positive on $t \in (0,1]$, we can take the logarithms and the inequality is equivalent to
\begin{equation}
\int_{tv}^{v} \frac{d}{d z} \log f_{a}(z) \, dz \leq (N-1) \int_{tv}^{v} \frac{d}{dz}\log |z| \, dz, \qquad \forall t \in (0,1),\; a \in \R_0.
\end{equation}
The above inequality is equivalent to the corresponding one for the integrands. After some computation, we obtain the equivalent inequality
\begin{equation} \label{eq:troiaboia}
c_2(v) a^2 + v c_1(v) a +c_0(v) \geq 0, \qquad \forall (a,v) \in D_q^+,
\end{equation}
where we defined
\begin{gather}
c_2(v)  := 3 \sin (v)-v \cos (v), \qquad c_1(v) := 2  \sin (v)-v \cos (v), \\
c_0(v)  := 3 \sin (v)-3 v \cos (v) -v^2 \sin (v) = \int_0^v z (\sin(z)- z \cos(z)) \,dz.
\end{gather}
Thanks to the inequality $\sin(v) -v\cos(v) \geq 0$ holding for $v \in (0,\pi)$, we have
\begin{equation}
c_2(v) \geq 2 v \cos(v), \quad \text{and} \quad c_0(v) \geq 0.
\end{equation}
Hence \eqref{eq:troiaboia} is implied by the easier inequality
\begin{equation}\label{eq:easier}
2 a^2 \cos(v)  +  a (2 \sin(v) - v \cos(v)) \geq 0, \qquad \forall (a,v) \in D_q^+.
\end{equation}
The condition $(a,v) \in D_q^+$ which, more precisely, is the rewriting of \eqref{eq:conditionD} in terms of the variables $a$ and $v$, reads
\begin{equation}\label{eq:conditionina}
a^2 \cos(v) + a \sin(v) \geq 0, \qquad v \in (0,\pi), \qquad a \in \R_0.
\end{equation}
In the case $a >0$ (corresponding to geodesics moving initially to the right, possibly turning back at an intermediate time) inequality \eqref{eq:easier} is trivially verified, only using the condition $v \in (0,\pi)$. In the remaining case $a <0$ (corresponding to geodesics moving to the left, towards the $y$-axis of $\G_+$), the inequality \eqref{eq:easier} is also verified, invoking \eqref{eq:conditionina}. This concludes the proof.
\end{proof}

\section{A comment on Bakry-Emery curvature}\label{s:comment}

With the exception of the singular region $\{x=0\}$, the metric structure on the Grushin plane is Riemannian. In particular $\mathbb{G}_{\pm} = (\R^2_\pm,d,\mathcal{L})$ can be seen as a Riemannian smooth m.m.s.\ with weighted measure 
\begin{equation}
\mathcal{L} = e^{-V} \mathrm{vol}_g, \qquad V= -\log|x|.
\end{equation}
For weighted Riemannian m.m.s.\ a sufficient condition for the $\mathrm{MCP}(0,N)$ (and also for the stronger curvature-dimension condition $\mathrm{CD}(0,N)$), is the non-negativity of the Bakry-Emery Ricci curvature $\mathrm{Ric}_{N,V}$. See for example \cite[Theorem 14.8]{V-oldandnew}. For a general weighted $n$-dimensional Riemannian m.m.s.\ $(M,d_g, e^{-V} \mathrm{vol}_g)$ with $V \in C^\infty(M)$, the Bakry-Emery Ricci curvature is given by the formula
\begin{equation}
\mathrm{Ric}_{N,V} = \mathrm{Ric}_g + \mathrm{Hess}(V) - \frac{dV \otimes dV}{N-n}, \qquad N >n.
\end{equation}
For the specific case of the Grushin metric \eqref{eq:metric} we obtain, in terms of the orthonormal frame \eqref{eq:frame}, the following formulas for the Levi-Civita connection:
\begin{equation}
\nabla_{X_1} X_1 = \nabla_{X_1}X_2 = 0, \qquad \nabla_{X_2} X_1 = -\frac{1}{x} X_2, \qquad \nabla_{X_2}X_2 = \frac{1}{x} X_1,
\end{equation}
from which we can compute the following quantities:
\begin{equation}
\mathrm{Ric}_g = -\frac{2}{x^2} g, \qquad \mathrm{Hess}(V) = \frac{1}{x^2} g, \qquad dV \otimes dV = \frac{1}{x^2} dx \otimes dx.
\end{equation}
It follows that the Bakry-Emery Ricci curvature for $\G_\pm$, is
\begin{equation}
\mathrm{Ric}_{N,V} = -\frac{1}{x^2} g - \frac{1}{x^2} \frac{dx \otimes dx}{N-n}, \qquad N > n.
\end{equation}
The above is never non-negative, actually $\mathrm{Ric}_{N,V} \leq - \tfrac{1}{x^2} g$. Therefore, we can conclude that Theorems~\ref{t:1}-\ref{t:2} do not follow from Bakry-Emery comparison.

\appendix

\section{Grushin geodesics and cut-locus}\label{s:appendix}

Following \cite[Section 3.2]{Gauss-Bonnet}, we give a more detailed presentation of the geodesic structure of the Grushin plane.

For all $q \in \G$ and $(u,v) \in T_q^*\G$, we consider \emph{rays} $t \mapsto \exp_q(tu,tv)$, for all $t \geq 0$. Rays are length-minimizing up to the first time $t_*>0$ at which $(tu,tv)$ meets the boundary of $\bar{D}_q$ (that is, the lines $v=\pm \pi$), which is equal to $t_* = \pi/|v|$, for $v \neq 0$, and $t_* = +\infty$ otherwise. See Proposition~\ref{p:grushingeod}. 

The set $\mathrm{Cut}(q) \subset \G$ where rays from $q$ cease to be length-minimizing is called \emph{cut-locus} of $q$. We refer to the explicit expressions of the exponential map in Section~\ref{s:exponentialmap}. Since reflections w.r.t.\ the $y$-axis and vertical translations are isometries of $\G$, it is sufficient to analyse two qualitatively different cases: $q=(0,0)$, $q=(-1,0)$, and the special case of straight lines.

\subsection{Straight lines} 

Rays corresponding to $v=0$ are straight horizontal lines $(x_t,y_t) = (x + tu, y)$, and are length-minimizing for all times and initial points. In general, rays corresponding to $v<0$ are reflection w.r.t.\ the $x$-axis of rays corresponding to $v>0$. Moreover, rays are determined by $(u,v)$ up to rescaling. Hence for the remaining cases we set $v=\pm 1$.

\subsection{Case $q=(0,0)$}

The corresponding rays are $(x_t,y_t)$, with
\begin{equation}
x_t = u \sin(t), \qquad y_t = \mathrm{sgn}(v)\tfrac{u^2}{4} \left(2t -\sin (2 t )\right).
\end{equation}
If $u=0$, these rays corresponds to the trivial geodesic. Otherwise, the two rays corresponding to $\pm u$ meet at a point $p$ on the $y$-axis, at time $t_* = \pi$, where they cease to be length-minimizing. There are exactly two geodesics between $q = (0,0)$ and $p$. In this case the closure of $\mathrm{Cut}(q)$ coincides with the $y$-axis, including hence $q$ itself, a phenomenon that never occurs in Riemannian geometry. See Figure~\ref{f:fig}.

\subsection{Case $q=(1,0)$}

The corresponding rays are $(x_t,y_t)$, with
\begin{align}
x_t  = \cos(t ) - u \sin(t), \qquad y_t  = \mathrm{sgn}(v) \tfrac{\sin (2 t ) \left(1 -u^2\right)+2  \left(t \left(1 +u^2\right)-u +u  \cos (2 t )\right)}{4}.
\end{align}
Let $u \neq 0$. The two rays corresponding to $\pm u$ meet at a point $p$, located to the left side of the $y$-axis, belonging to the set 
\begin{equation}
\mathrm{Cut}(q) = \{(-x_c,y_c + s \mid s \geq 0\} \cup \{(-x_c,-y_c-s \mid s \geq 0\},
\end{equation}
with $x_c=1,y_c=\pi/2$. There are exactly two geodesics between $q$ and $p$. In the special case $u=0$, the corresponding ray meets $\mathrm{Cut}(q)$ at its boundary (at $(-x_c,y_c)$ if $v>0$ and at $(-x_c,-y_c)$ if $v<0$), and there is only one geodesic joining $q$ with each boundary point of $\mathrm{Cut}(p)$. See Figure~\ref{f:fig}.

\begin{figure}
\includegraphics[width=.4\textwidth]{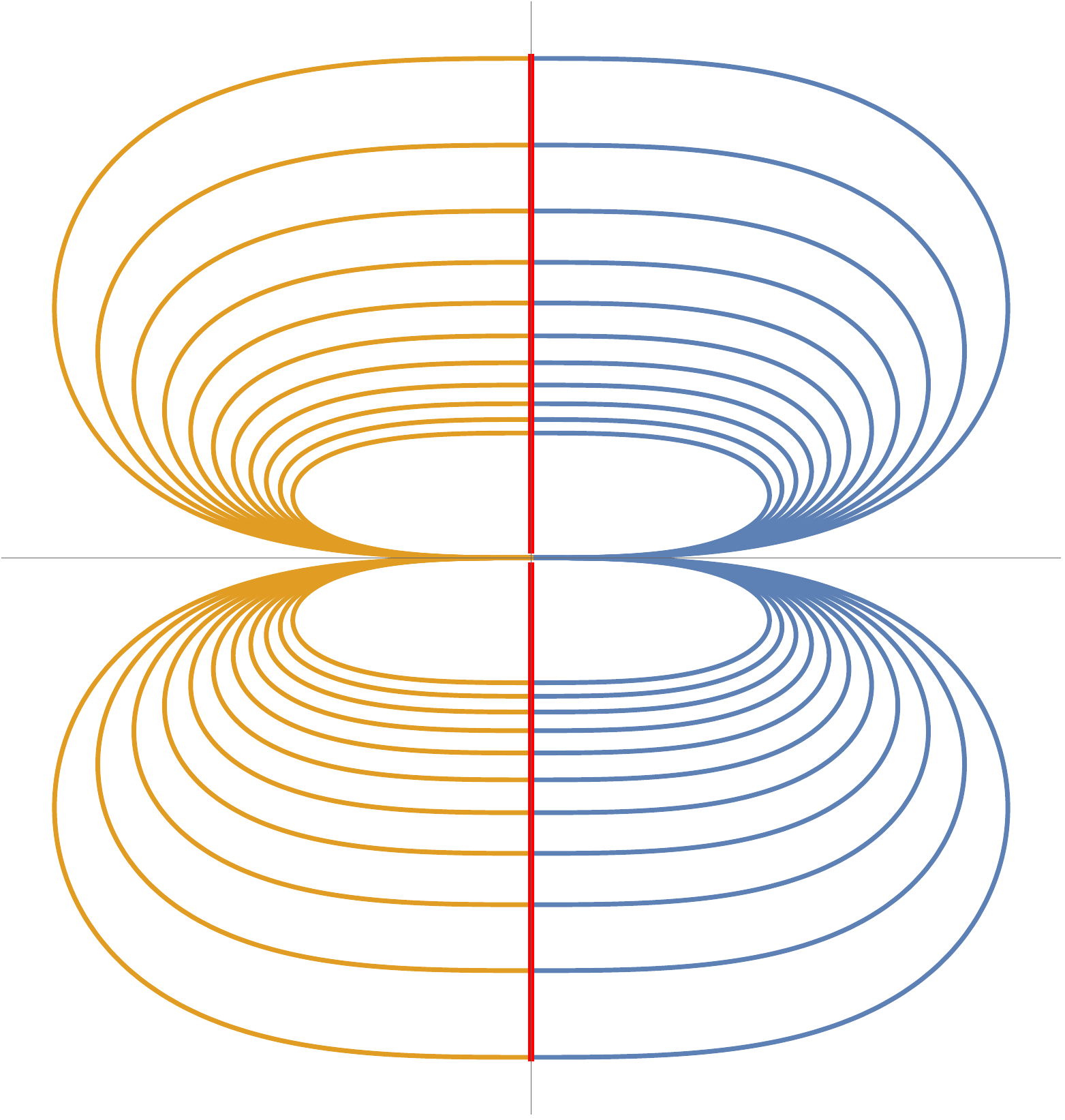} \qquad\qquad \includegraphics[width=.4\textwidth]{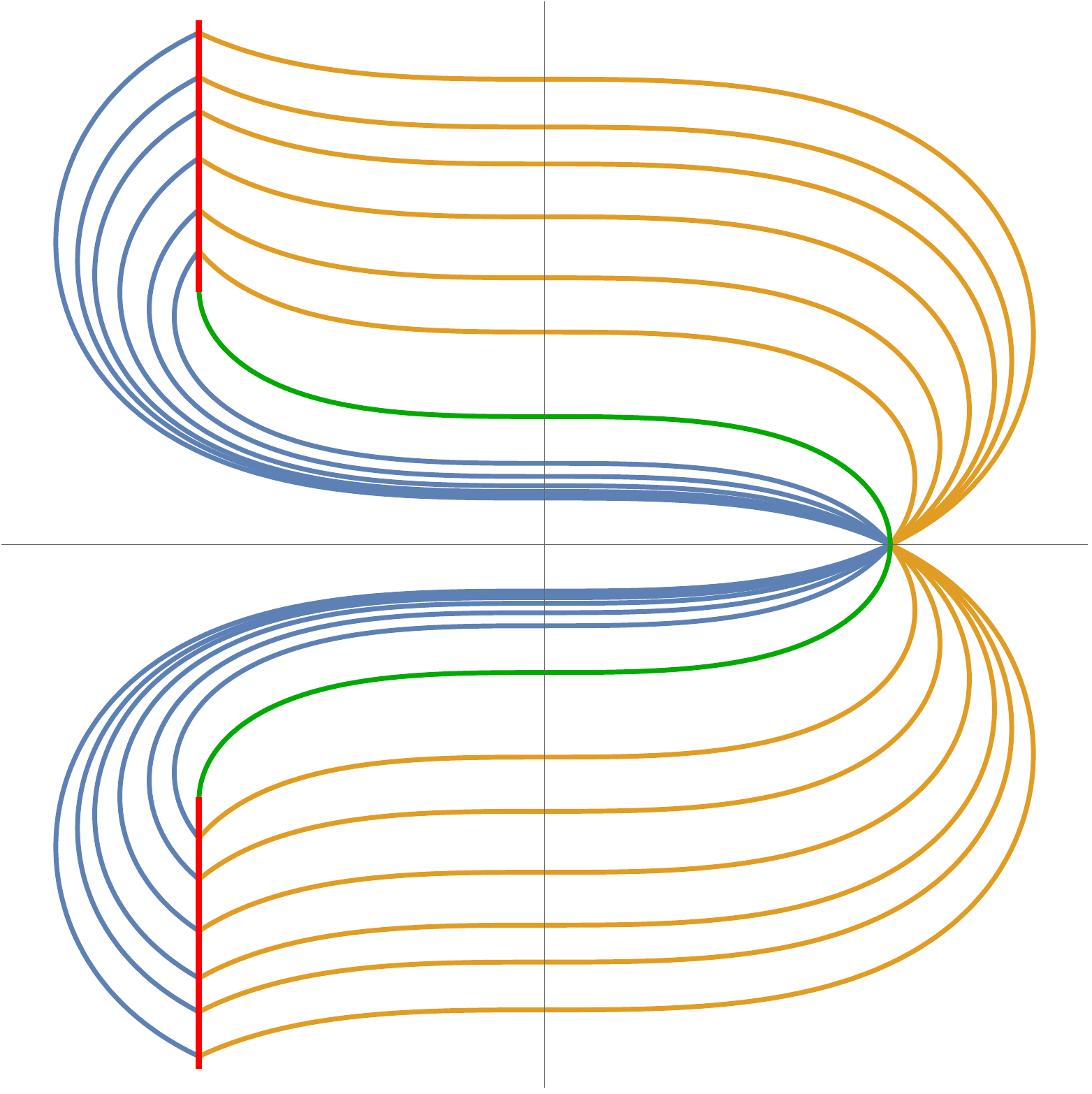}
\caption{Geodesics and cut-locus (in red) of the Grushin plane starting from the origin and from $q=(1,0)$.} \label{f:fig}
\end{figure}

\section*{Acknowledgements}
This work was supported by the Grant ANR-15-CE40-0018 of the ANR, by the ANR project ANR-15-IDEX-02, and by a public grant as part of the Investissement d'avenir project, reference ANR-11-LABX-0056-LMH, LabEx LMH, in a joint call with the ``FMJH Program Gaspard Monge in optimization and operation research''.

\bibliographystyle{alpha}
\bibliography{gluing}

\end{document}